\numberwithin{equation}{section}
\newtheorem{theorem}{Theorem}[section]
\newtheorem{proposition}[theorem]{Proposition}
\newtheorem{lemma}[theorem]{Lemma}
\newtheorem{corollary}[theorem]{Corollary}
\theoremstyle{remark}
\newtheorem{example}[theorem]{Example}
\newtheorem{remark}[theorem]{Remark}
\newcounter{FNC}[page]
\def\fauxfootnote#1{{\addtocounter{FNC}{2}$^\fnsymbol{FNC}$%
     \let\thefootnote\relax\footnotetext{$^\fnsymbol{FNC}$#1}}}
\newcommand{\defcolor}[1]{\Blue{#1}}
\newcommand{\demph}[1]{\defcolor{{\sl #1}}}
\newcommand{\C}{{\mathbb{C}}}
\newcommand{\Z}{{\mathbb{Z}}}
\newcommand{\calE}{{\mathcal{E}}}
\newcommand{\calF}{{\mathcal{F}}}
\newcommand{\calG}{{\mathcal{G}}}
\newcommand{\calI}{{\mathcal{I}}}
\newcommand{\calO}{{\mathcal{O}}}
\newcommand{\frakC}{{\mathfrak{C}}}
\newcommand{\frakh}{{\mathfrak{h}}}
\newcommand{\frakS}{{\mathfrak{S}}}
\title{Cohomological consequences of the pattern map}
\author{Praise Adeyemo}
\address{Department of Mathematics\\
  University of Ibadan\\
   Ibadan, Oyo, Nigeria}
\email{ph.adeyemo@ui.edu.ng}
\author{Frank Sottile}
\address{Department of Mathematics\\
         Texas A\&M University\\
         College Station\\
         Texas \ 77843\\
         USA}
\email{sottile@math.tamu.edu}
\urladdr{http://www.math.tamu.edu/\~{}sottile}
\thanks{Research of Sottile supported in part by NSF grant DMS-1001615}
\thanks{We thank the Universities of Ibadan and Ilorin, who supported the visit of Sottile in July 2014}
\subjclass{14M15, 14N15, 05E05}
\keywords{Schubert class, Grothendieck polynomial, $K$-theory, permutation pattern}
\begin{document}

\begin{abstract}
 Billey and Braden defined maps on flag manifolds that are the geometric counterpart of permutation patterns.
 A section of their pattern map is an embedding of the flag manifold of a Levi subgroup into
 the full flag manifold.
 We give two expressions for the induced map on cohomology.
 One is in terms of generators and the other is in terms of the Schubert basis.
 We show that the coefficients in the second expression are naturally Schubert structure
 constants and therefore positive.
 Similar results hold for $K$-theory, generalizing known formulas in type $A$ for cohomology and
 $K$-theory. 
\end{abstract}

\maketitle
%%%%%%%%%%%%%%%%%%%%%%%%%%%%%%%%%%%%%%%%%%%%%%%%%%%%%%%%%%%%%%%%%%%%%%%%%%%%%%%%%
%
\section*{Introduction}
In their study of singularities of Schubert varieties and coefficients of Kazhdan-Lusztig
polynomials~\cite{BB}, Billey and Braden introduced maps of flag
manifolds that are the geometric counterpart of the generalized permutation patterns of
Billey and Postnikov~\cite{BiPo}. 
We study sections of the Billey-Braden pattern map.
For the type $A$ flag manifold, such sections led to 
formulas for certain specializations of Schubert~\cite{BS98} and Grothendieck polynomials~\cite{LRS}.  
In both cases, this gave new expressions for Schubert class representatives as explicit sums of
monomials~\cite{BS02,LRS}.
These formulas express the pullback of a Schubert class as a sum of
Schubert classes on the smaller flag manifold whose coefficients are naturally Schubert
structure constants.
This was applied in~\cite{BSY} to show that quiver coefficients~\cite{Bu01,Bu05,KMS,Mi}
are naturally Schubert structure constants, as the decomposition
formula~\cite{BKTY1,BKTY2} is a special case of the formulas in~\cite{BS98,LRS}.

We generalize the formulas in~\cite{BS98,LRS}.
Let $L$ be a Levi subgroup of a semisimple algebraic group $G$ and write their flag manifolds as $\calF_L$ and 
$\calF_G$, respectively.
For each right coset of the Weyl group of $L$ in the Weyl group of $G$ there is a 
natural embedding of $\calF_L$ into $\calF_G$.
If $L$ is the Levi of a standard parabolic subgroup and $\varsigma$ is the minimal element in a 
coset, then the corresponding map on cohomology is expressed in terms of polynomial representatives as the map
on generators induced by $\varsigma$.
Analyzing the pushforward map on Schubert cycles in homology gives an expression for
the pullback map as a sum of Schubert classes for $\calF_L$ whose coefficients are naturally Schubert
structure constants for $\calF_G$.

We also give a similar formula for the pullback in $K$-theory.

In Section~\ref{S:flag}, we give background information on the cohomology and Grothendieck rings of flag
manifolds. 
Our main results are given in Section~\ref{S:pattern}, where we recall the results of Billey and Braden, and
apply them to obtain our formulas.

%%%%%%%%%%%%%%%%%%%%%%%%%%%%%%%%%%%%%%%%%%%%%%%%%%%%%%%%%%%%%%%%%%%%%%%%%%%%%%%%%
%
\section{Flag manifolds}\label{S:flag}
We work over the complex numbers, but our results are valid for any
algebraically closed field $k$, when we replace cohomology by Chow groups.

Let \defcolor{$G$} be a connected and simply connected complex semisimple linear algebraic group,
\defcolor{$B$} a Borel subgroup of $G$, and \defcolor{$T$} the maximal torus contained in $B$.
The Weyl group $\defcolor{W}:=N(T)/T$ of $G$ is the quotient of the normalizer of $T$
by $T$.
Our choice of $B$ gives $W$ the structure of a Coxeter group with a preferred set of
generators and a length function, $\defcolor{\ell}\colon W\to\{0,1,2,\dotsc,\}$.
Let $\defcolor{w_o}\in W$ be the longest element.

All Borel subgroups are conjugate by elements of $G$, which identifies the set $\defcolor{\calF}$ of Borel
subgroups as the orbit $G/B$, called the \demph{flag manifold}.
A Borel subgroup $B_0$ is fixed by an element $g\in G$ if and only if $g\in B_0$.
The Weyl group embeds in $\calF$ as its set of $T$-fixed points,
$\calF^T$.
These index $B$-orbits on $G/B$, which together form the \demph{Bruhat decomposition},
 \begin{equation}\label{Eq:BruhatDecomposition}
   \calF\ =\ \bigsqcup_{w\in W} BwB/B\,.
 \end{equation}
Each orbit $BwB/B$ is isomorphic to an affine space of dimension $\ell(w)$.
An orbit is a \demph{Schubert cell, $X^\circ_w$}, and its closure is a \demph{Schubert variety},
\defcolor{$X_w$}.
Set $\defcolor{B_-}:=w_oBw_o$, which is the Borel subgroup opposite to $B$ containing $T$.
Let $\defcolor{X^w}:=w_oX_{w_ow}=\overline{B_-w}$, which is also a Schubert variety and has codimension $\ell(w)$.
The intersection $X^v\cap X_w$ is nonempty if and only if $w\geq v$ and in that case it is irreducible of dimension
$\ell(w)-\ell(v)$~\cite{Deodhar,Richardson}.
%
%  This was removed as we are not doing equivariant theories
%
%We also note that $X_v$, $X^w$, and $X_v\cap X^w$ are all $T$-stable subvarieties.

Both the integral homology groups and cohomology ring of $\calF$ are free as 
$\Z$-modules with bases given by \demph{Schubert classes} associated to
Schubert varieties, and these classes do not depend upon the choice of Borel subgroup.
For homology, let $\defcolor{[X_w]}\in H_{2\ell(w)}(\calF,\Z)$ be the fundamental cycle of the
Schubert variety.
For cohomology, let $\defcolor{\frakS_w}\in H^{2\ell(w)}(\calF,\Z)$ be the cohomology
class Poincar\'e dual to $[X^w]$. 
Then $\frakS_v\cap[X_w]=[X^v\cap X_w]$, where $\cap$ is the cap product giving the action of cohomology on
homology. 

Since the Schubert classes form a basis, there are integer \demph{Schubert structure constants}
\defcolor{$c^w_{u,v}$} for $u,v,w\in W$ defined by the identity in $H^*(\calF,\Z)$
 \begin{equation}\label{Eq:SSC}
    \frakS_u\cdot \frakS_v\ =\ \sum_w c^{w}_{u,v} \frakS_w\,.
 \end{equation}
These constants vanish unless $\ell(w)=\ell(u)+\ell(v)$ and they are nonnegative, for they count the number of
points in a triple intersection of Schubert varieties,
$gX^{u}\cap X^v\cap X_{w}$, where $g\in G$ is general.
Important for us is the duality formula.
Let $\pi\colon\calF\to\rm{pt}$ be the map to a point.
Then, if $v,w\in W$, we have
 \begin{equation}\label{Eq:duality}
    \pi_*(\frakS_v\cap [X_{w}])\ =\ \left\{\begin{array}{rcl}
       1&\ &\mbox{if } v=w\\
       0&&\mbox{otherwise}\end{array}\right.\,,
 \end{equation}
so the Schubert basis is self-dual.
Combining this with~\eqref{Eq:SSC} gives
 \begin{equation}\label{Eq:pushforward_coeff}
   c^w_{u,v}\ =\ \pi_*(\frakS_u\cdot\frakS_v\cap [X_w])\,.
 \end{equation}

Recall the projection formula.
Let $f\colon Y\to Z$ be a map of compact topological spaces and $\pi\colon Y,Z\to\rm{pt}$ maps to a point.
For $y\in H_*(Y)$ and $z\in H^*(Z)$, we have 
 \begin{equation}\label{Eq:projection}
   \pi_*(z\cap f_*(y))\ =\ \pi_*(f^*(z)\cap y)\,.
 \end{equation}

The cohomology ring of the flag manifold has a second, algebraic description.
The Weyl group acts on the dual \defcolor{$\frakh^*$} of the Lie algebra \defcolor{$\frakh$} of the torus.
Borel~\cite{Bo53} showed that the cohomology of $\calF$ with complex coefficients is naturally identified with
the quotient of the symmetric algebra \defcolor{$S_{\bullet}\frakh^*$} of $\frakh^*$ by the ideal generated by its  
non-constant $W$-invariants,
 \begin{equation}\label{Eq:coh_Presen}
   H^*(\calF,\C)\ =\ S_{\bullet}\frakh^*/\langle(S_{\bullet}\frakh^*)_+^W\rangle
   \ =\ S_{\bullet}\frakh^*\otimes_{(S_{\bullet}\frakh^*)^W} \C\,.
 \end{equation}

These two descriptions, one geometric and the other algebraic, are linked.
Chevalley~\cite{Ch91} gave a formula for the product of any Schubert class by a generating
Schubert class.
This special case of the formula~\eqref{Eq:SSC} determines it and implies
expressions for a Schubert class as a polynomial in the generating classes.
A breakthrough was made when Bernstein, Gelfand, and Gelfand~\cite{BGG73} and Demazure~\cite{De74} 
gave a computable system of polynomials $\defcolor{P_w}\in S_{\bullet}\frakh^*$ for $w\in W$ such that $P_w$
represents the Schubert class $\frakS_w$. 
While not unique, these representatives depend only upon the choice of $P_{w_o}$.

The formulas we obtain in cohomology use only basic properties of cohomology, functoriality, the geometric
Schubert basis, generation by the dual of $\frakh$, duality, and the projection formula.
Consequently, such formulas exist for more general cohomology theories.
One such example is the Grothendieck ring.

Under tensor product, the Grothendieck group of vector bundles on $\calF$ modulo short exact sequences is
a ring \defcolor{$K^0(\calF)$}.
As $\calF$ is smooth, this is isomorphic to the Grothendieck group \defcolor{$K_0(\calF)$} of 
coherent sheaves on $\calF$.
A consequence of the Bruhat decomposition~\eqref{Eq:BruhatDecomposition} is that classes of structure
sheaves of Schubert varieties form a $\Z$-basis
of $K_0(\calF)=K^0(\calF)$.
Write \defcolor{$\calG_w$} for the class $[\calO_{X^w}]$ of the structure sheaf of the Schubert
variety $X^w$.

The Grothendieck ring has a presentation similar to~\eqref{Eq:coh_Presen} for cohomology~\cite{Pi}.
Let $\defcolor{\frakh^*_\Z}:=\mbox{Hom}(T,\C^\times)$ be the character group of $T$.
The representation ring \defcolor{$R(B)$} of $B$ is isomorphic to $\Z[\frakh^*_Z]$ and the representation ring
\defcolor{$R(G)$} of $G$ is its $W$-invariants, $R(B)^W$.
There is a natural map $R(B)\to K^0(G/B)$ induced by $V\mapsto G\times_BV$, for a representation $V$ of $B$.
This induces an isomorphism
\[
   R(B)\otimes_{R(G)} \Z\ \xrightarrow{\ \sim\ }\ K^0(G/B)\,.
\]
As with cohomology, there are (non-unique) representatives of Gro\-then\-dieck classes $\calG_w$
in $K^0(G/B)$ that lie in the Laurent ring $\Z[\frakh^*_Z]$, and these depend only upon
the choice of a representative for $\calG_{w_o}$~\cite{De74}.

Brion~\cite[Lemma 2]{Br02} showed that the product of these Grothendieck classes corresponds to the
intersection of Schubert varieties,
\[
   \calG_u\cdot\calG_v\ =\ [\calO_{X^u \cap gX^v}]\,,
\]
where $g\in G$ is general.

As with cohomology, the Grothendieck ring has a pairing induced by multiplication and the map to a
point, $\pi\colon\calF\to\rm{pt}$.
For sheaves $\calE,\calE'$ on $\calF$ this pairing is
\[
   \langle [\calE]\,,\,[\calE']\rangle\ :=\ 
   \pi_*([\calE]\cdot[\calE'])\ =\ \pi_*([\calE\otimes g\calE'])\,,
\]
where $g\in G$ is general and $\pi_*$ is the derived functor of global sections,
\[
   \pi_*([\calE])\ =\ \chi(\calE)\ =\ 
   \sum_{i\geq 0} \dim H^i(\calF,\calE)\,,
\]
which is the Euler-Poincar\'e characteristic of the sheaf $\calE$.

Since $\chi(\calO_{X_w})=1$ and if $v\geq w$ and $g\in G$ is general, then 
$\chi(\calO_{X_{v}\cap gX^{w}})=1$~\cite[Section~3]{BrLa}, this Schubert basis is not self-dual.
Brion and Lakshmibai showed that the dual basis is given by the ideal sheaves of the Schubert
boundaries.
Specifically, let \defcolor{$\calI_w$} be the sheaf of $\calO_{X_{w}}$-ideals that define the complement
of the Schubert cell $X^\circ_{w}$.
Then 
 \begin{equation}\label{Eq:Groth_duality}
  \pi_*(\calG_w\cdot[\calI_v])\ =\ \left\{ \begin{array}{rcl}
    1&\ &\mbox{if\ }v=w\\ 0&&\mbox{otherwise}\end{array}\right.\ .
 \end{equation}
Allen Knutson gave an expression for these classes (which is M\"obius inversion), 
 \begin{equation}\label{Eq:ideal_sheaves}
   [\calI_w]\ =\ \sum_{v\leq w} (-1)^{\ell(w)-\ell(v)} \calG_v\,.
 \end{equation}

As the Schubert classes form a basis of $K^0(\calF)$, there are integer 
\demph{Schubert structure constants $c^w_{u,v}$} for $u,v,w\in W$ defined by the identity in $K^0(\calF)$,
\[
   \calG_u\cdot\calG_v\ =\ \sum_w c^w_{u,v}\, \calG_w\,.
\]
By duality, we have 
 \begin{equation}\label{Eq:Groth_push_coeff}
   c^w_{u,v} \ =\ \pi_*(\calG_u\cdot\calG_v\cdot [\calI_{w}])\,.
 \end{equation}
These Schubert structure constants vanish unless $\ell(w)\geq\ell(u)+\ell(v)$ and they coincide with those for
cohomology when $\ell(w)=\ell(u)+\ell(v)$ (this is why we use the same notation for both).
This is because $K_0(\calF)$ is filtered by the codimension of the support of a sheaf
with the associated graded algebra the integral cohomology ring.
Thus when $\ell(w)=\ell(u)+\ell(v)$, $c^w_{u,v}\geq 0$.
In general, these constants enjoy the following positivity~\cite{Br02}, 
 \begin{equation}\label{Eq:Groth_pos}
   (-1)^{\ell(w)-\ell(u)-\ell(v)} c^w_{u,v}\ \geq\ 0\,.
 \end{equation}

%%%%%%%%%%%%%%%%%%%%%%%%%%%%%%%%%%%%%%%%%%%%%%%%%%%%%%%%%%%%%%%%%%%%%%%%%%%%%%%%%
%
\section{The Pattern Map}\label{S:pattern}

Let us recall the geometric pattern map and its main properties as developed by Billey and Braden~\cite{BB}.
Let $\defcolor{\eta}\colon\C^*\to T$ be a cocharacter with image the subgroup \defcolor{$T_\eta$} of $T$.
Springer~~\cite[Theorem 6.4.7]{Sp} showed that the centralizer $\defcolor{G'}:=Z_G(T_\eta)$ of $T_\eta$ in $G$
is a connected, reductive subgroup and $T$ is a maximal torus of $G'$. 
Furthermore, if $B_0\in\calF$ is a fixed point of $T_\eta$, so that $T_\eta\subset B_0$, then 
$B_0\cap G'$ is a Borel subgroup of $G'$.

If $\Blue{\calF'}:=G'/B'$ is the flag variety of $G'$, and \defcolor{$\calF^{T_\eta}$} the set of $T_\eta$-fixed
points of $\calF$, then this association $\calF^{T_\eta}\ni B_0\mapsto B_0\cap G'\in\calF'$ defines a 
$G'$-equivariant map $\defcolor{\psi}\colon\calF^{T_\eta}\to\calF'$.
Restricting to $T$-fixed points, this gives a map $\psi\colon W\to W'$, where \defcolor{$W'$} is the Weyl group of
$G'$. 
This is the Billey-Postnikov pattern map, generalizing maps on the symmetric groups coming from permutation
patterns. 
Specifically, $\psi\colon W\to W'$ is the unique map that is
(1) $W'$-equivariant in that $\psi(wx)=w\psi(x)$ for $w\in W'$ and $x\in W$, and (2) respects the
Bruhat order in that if $\psi(x)\leq\psi(wx)$ in $W'$ with $w\in W'$ and $x\in W$, then $x\leq wx$ in $W$.
Billey and Braden use this to deduce that the map $\psi$ is an isomorphism on each connected
component of $\calF^{T_\eta}$, and the connected components of $\calF^{T_\eta}$ are in bijection with right
cosets  $W'\backslash W$ of $W'$ in $W$.

Observe that $B_-\cap G'=\defcolor{B_-'}$, which is the Borel group opposite to $B'$ containing $T$.
Let \defcolor{$\calF^{T_\eta}_\varsigma$} be the component of $\calF^{T_\eta}$ corresponding to a coset
$W'\varsigma$ with $\varsigma\in W'\varsigma$ having minimal length, and let
$\defcolor{\iota_\varsigma}\colon\calF'\xrightarrow{\sim}\calF^{T_\eta}_\varsigma$ be the corresponding
section of the pattern map. 
Note that 
\[
   \calF^{T_\eta}_\varsigma\ =\ G'\varsigma\ =\ \overline{B_-'\varsigma}\ 
   \subset\ \overline{B_-\varsigma}\ =\ X^\varsigma\,.
\]
Billey and Braden also note that if $w\in W'$, then 
$\iota_\varsigma(X'_w)=X_{w\varsigma}\cap \calF^{T_\eta}_\varsigma = (X_{w\varsigma})^{T_\eta}$.
Combining these facts gives the following key lemma.

%%%%%%%%%%%%%%%%%%%%%%%%%%%%%%%%%%%%%%%%%%%%%%%%%%%%%%%%%%%%%%%%%%%%%%%%%%%%%%%%%
\begin{lemma}\label{L:image}
 Let $W'\varsigma$ be a coset of\/ $W'$ in $W$ with $\varsigma$ of minimal length in $W'\varsigma$ and 
 $\iota_\varsigma\colon \calF'\to\calF^{T_\eta}$ the corresponding section of the pattern map.
 Then, for $w\in W'$, we have 
\[
   \iota_\varsigma(X'_w) \ \subset\ X_{w\varsigma}\cap X^\varsigma\,.
\]
\end{lemma}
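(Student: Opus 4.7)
The plan is to combine the two facts that the excerpt records immediately before the lemma. Billey and Braden's analysis yields, on one hand, the equality $\iota_\varsigma(X'_w)=X_{w\varsigma}\cap\calF^{T_\eta}_\varsigma$, and on the other, the chain $\calF^{T_\eta}_\varsigma=\overline{B_-'\varsigma}\subset\overline{B_-\varsigma}=X^\varsigma$. The proof then amounts to reading off both containments and intersecting them.

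More concretely, the first equality immediately gives $\iota_\varsigma(X'_w)\subset X_{w\varsigma}$, and also gives $\iota_\varsigma(X'_w)\subset\calF^{T_\eta}_\varsigma$. The displayed chain of containments then upgrades the latter to $\iota_\varsigma(X'_w)\subset X^\varsigma$. Intersecting, we obtain
\[
   \iota_\varsigma(X'_w)\ \subset\ X_{w\varsigma}\cap X^\varsigma,
\]
which is the statement of the lemma.

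The only substantive point, and it is not a deep one, is the containment $\overline{B_-'\varsigma}\subset\overline{B_-\varsigma}$. This follows from the observation (also recorded in the excerpt) that $B_-'=B_-\cap G'\subset B_-$, whence $B_-'\varsigma\subset B_-\varsigma$ as subsets of $\calF$, and closure in $\calF$ preserves set containment. Since the equality $\iota_\varsigma(X'_w)=X_{w\varsigma}\cap\calF^{T_\eta}_\varsigma$ is attributed to Billey--Braden, I would simply cite that reference rather than reprove it. Thus there is really no serious obstacle: the lemma is a transparent consequence of facts already established, and its role is to package them in the exact form needed for the pushforward and pullback computations in the sequel.
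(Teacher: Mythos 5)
Your proof is correct and is essentially identical to the paper's (implicit) argument: both combine the Billey--Braden identity $\iota_\varsigma(X'_w)=X_{w\varsigma}\cap\calF^{T_\eta}_\varsigma$ with the containment chain $\calF^{T_\eta}_\varsigma=\overline{B_-'\varsigma}\subset\overline{B_-\varsigma}=X^\varsigma$, and intersect. Your extra remark justifying $\overline{B_-'\varsigma}\subset\overline{B_-\varsigma}$ via $B_-'=B_-\cap G'\subset B_-$ is a harmless and correct amplification of a step the paper leaves to the reader.
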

%%%%%%%%%%%%%%%%%%%%%%%%%%%%%%%%%%%%%%%%%%%%%%%%%%%%%%%%%%%%%%%%%%%%%%%%%%%%%%%%%

%%%%%%%%%%%%%%%%%%%%%%%%%%%%%%%%%%%%%%%%%%%%%%%%%%%%%%%%%%%%%%%%%%%%%%%%%%%%%%%%%
%
\subsection{The pattern map on cohomology}\label{SS:Coh}

The group $G'$ centralizing $T_\eta$ in $G$ is a Levi subgroup of some parabolic subgroup of $G$.
All parabolic subgroups of $G$ are conjugate to a \demph{standard} parabolic subgroup, which is a parabolic
subgroup containing $B$.
The set of standard parabolics is in bijection with subsets $I$ of the Dynkin diagram of $G$.

We will assume that $G'$ is the Levi subgroup of a standard parabolic corresponding to a subset $I$, and henceforth
write \defcolor{$G_I$} for $G'$ and \defcolor{$B_I$} for $B'$.
Write \defcolor{$\calF_I$} for its flag variety, which is a product of flag varieties whose factors correspond
to the connected components of $I$ in the Dynkin diagram of $G$. 
Its Weyl group is the parabolic subgroup $W_I$ of $W$, which is the subgroup generated by the simple
reflections corresponding to $I$.

The right cosets $W_I\backslash W$ are indexed by minimal length coset representatives \defcolor{$W^I$}.
Useful for us is the following proposition.

%%%%%%%%%%%%%%%%%%%%%%%%%%%%%%%%%%%%%%%%%%%%%%%%%%%%%%%%%%%%%%%%%%%%%%%%%%%%%%%%%
\begin{proposition}[{\protect\cite[Prop.~2.4.4]{BjBr}}]\label{P:coset}
 Let $\varsigma\in W^I$ be a minimal length representative of a coset of $W_I$ in $W$.
 For $w\in W_I$, we have $\ell(w\varsigma)=\ell(w)+\ell(\varsigma)$ and 
 the intervals $[e,w]$ in $W_I$ and $[\varsigma,w\varsigma]$ in $W$ are isomorphic.
\end{proposition}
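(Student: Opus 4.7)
The plan is to establish length additivity first, and then show that right translation by $\varsigma$ gives the required interval isomorphism. The key technical input is the lifting property of Bruhat order applied through parabolic factorizations.

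For the length formula, use the standard characterization of $W^I$: an element $v \in W$ lies in $W^I$ precisely when $\ell(sv)>\ell(v)$ for every simple reflection $s \in W_I$. A straightforward induction on $\ell(w)$ for $w \in W_I$ then yields $\ell(w\varsigma) = \ell(w) + \ell(\varsigma)$. Concretely, the concatenation of any reduced expression for $w$ in $W_I$ with any reduced expression for $\varsigma$ in $W$ is a reduced expression for $w\varsigma$.

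Next, define $\phi \colon [e,w] \to W$ by $u \mapsto u\varsigma$. Cancellation gives injectivity. The subword property of Bruhat order shows that $\phi([e,w]) \subset [\varsigma, w\varsigma]$ and that $\phi$ is order preserving: any $u \leq w$ in $W_I$ is a subword of a reduced expression for $w$, so $u\varsigma$ is a subword of the concatenated reduced expression for $w\varsigma$, giving $u\varsigma \leq w\varsigma$; taking the subword to be just $\varsigma$ gives $\varsigma \leq u\varsigma$. The same argument applied to $u \leq u'$ in $W_I$ shows $u\varsigma \leq u'\varsigma$ in $W$.

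The substantive work is surjectivity together with order reflection. For any $x \in W$, take its unique length-additive parabolic factorization $x = u' v'$ with $u' \in W_I$ and $v' \in W^I$. The lifting property descends through such factorizations: if $x \leq w\varsigma$, viewing $w\varsigma$ as the length-additive product $w \cdot \varsigma$, then $u' \leq w$ and $v' \leq \varsigma$; applying the lifting again to $\varsigma \leq u'v'$ via the trivial factorization $\varsigma = e\cdot\varsigma$ gives $\varsigma \leq v'$. Hence $v' = \varsigma$, so $x = \phi(u')$ with $u' \leq w$, giving surjectivity. The same lifting argument applied to $u\varsigma \leq u'\varsigma$ in their canonical factorizations yields $u \leq u'$, closing order reflection. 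The one nontrivial step is invoking the descent of Bruhat inequalities to the factors of a length-additive parabolic product; once this is secured, everything else is routine bookkeeping with subwords.
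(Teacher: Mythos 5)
The paper does not supply a proof of this proposition: it is cited verbatim from Bj\"orner--Brenti \cite{BjBr}. So I am evaluating your argument on its own.

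Your length additivity, the definition $\phi(u)=u\varsigma$, injectivity, and order preservation via the subword property are all fine, and they match the standard route. The problem is with what you flag as ``the one nontrivial step'': the claimed descent of a Bruhat inequality to \emph{both} factors of a length-additive parabolic factorization. That principle is false. Take $W=S_3$, $I=\{s_1\}$, so $W_I=\{e,s_1\}$ and $W^I=\{e,\,s_2,\,s_2s_1\}$. Then $s_1\leq s_2s_1$, with parabolic factorizations $s_1=s_1\cdot e$ and $s_2s_1=e\cdot s_2s_1$; the $W_I$-components are $s_1$ and $e$, and $s_1\not\leq e$. So $x\leq w\varsigma$ does \emph{not} give $u'\leq w$. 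Only the $W^I$-component inequality survives, because the projection $W\to W^I$ sending $x$ to the minimal element of $W_I x$ is order-preserving (this is Bj\"orner--Brenti Prop.~2.5.1, a genuinely one-sided statement).

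What the one-sided projection does give you, combined with $\varsigma\leq x\leq w\varsigma$, is $\varsigma\leq v'\leq\varsigma$, hence $v'=\varsigma$ and $x=u'\varsigma$ with $u'\in W_I$. But you then still need the order reflection $u'\varsigma\leq w\varsigma\Rightarrow u'\leq w$ for $u',w\in W_I$, $\varsigma\in W^I$ --- and this is exactly what your false descent principle was supposed to hand you for free. It requires its own argument, which goes by induction on $\ell(\varsigma)$: pick a simple $s$ with $\varsigma s<\varsigma$; one checks $\varsigma s\in W^I$ (for $t\in I$, $\ell(t\varsigma s)\geq\ell(t\varsigma)-1=\ell(\varsigma)>\ell(\varsigma s)$); both $u'\varsigma$ and $w\varsigma$ have $s$ as a right descent, so the lifting property gives $u'\varsigma s\leq w\varsigma s$, and induction finishes. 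As written, your proposal assumes the order-reflection at the very step where it must be proved, so the argument is circular at its core even though the remaining bookkeeping is sound.
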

%%%%%%%%%%%%%%%%%%%%%%%%%%%%%%%%%%%%%%%%%%%%%%%%%%%%%%%%%%%%%%%%%%%%%%%%%%%%%%%%%

We use this to refine Lemma~\ref{L:image}.

%%%%%%%%%%%%%%%%%%%%%%%%%%%%%%%%%%%%%%%%%%%%%%%%%%%%%%%%%%%%%%%%%%%%%%%%%%%%%%%%%
\begin{theorem}\label{Th:image}
  Let $\varsigma\in W^I$ be a minimal length coset representative with $\iota_\varsigma\colon\calF_I\to\calF$
  the corresponding section of the pattern map.
  Then  
\[
   \iota_{\varsigma}(X_w)\ =\ X_{w\varsigma}\cap X^\varsigma\,.
\]
\end{theorem}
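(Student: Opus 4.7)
The plan is to combine Lemma~\ref{L:image} with a dimension count, using irreducibility of both sides to upgrade the containment to equality.

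First, Lemma~\ref{L:image} gives the inclusion $\iota_\varsigma(X_w)\subset X_{w\varsigma}\cap X^\varsigma$ for free, so the task is really to show the reverse inclusion. Rather than attempt that directly, I will show that both sides are irreducible closed subvarieties of $\calF$ of the same dimension, from which equality follows automatically.

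For the left-hand side: the pattern map restricts to an isomorphism $\iota_\varsigma\colon\calF_I\to \calF_\varsigma^{T_\eta}$ onto its component (Billey--Braden), and $\calF_\varsigma^{T_\eta}$ is closed in $\calF$ since $\calF^{T_\eta}$ is closed and each component is closed in $\calF^{T_\eta}$. Thus $\iota_\varsigma(X_w)$ is a closed irreducible subvariety of $\calF$ of dimension equal to $\dim X_w$. Since the simple reflections generating $W_I$ are a subset of those for $W$, the length function on $W_I$ agrees with the restriction of $\ell$, so $\dim \iota_\varsigma(X_w)=\ell(w)$.

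For the right-hand side: Proposition~\ref{P:coset} tells us that the interval $[e,w]$ in $W_I$ is isomorphic to $[\varsigma,w\varsigma]$ in $W$, so in particular $\varsigma\leq w\varsigma$ in $W$, and $\ell(w\varsigma)=\ell(w)+\ell(\varsigma)$. By the Deodhar--Richardson result recalled in Section~\ref{S:flag}, the Richardson variety $X_{w\varsigma}\cap X^\varsigma$ is then nonempty, irreducible, and of dimension $\ell(w\varsigma)-\ell(\varsigma)=\ell(w)$.

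Combining these: $\iota_\varsigma(X_w)$ is an irreducible closed subvariety of the irreducible variety $X_{w\varsigma}\cap X^\varsigma$, and the two have the same dimension $\ell(w)$, so they coincide. I do not expect a serious obstacle here; the only thing to verify carefully is that the two length functions agree (immediate) and that Proposition~\ref{P:coset} really gives both the length additivity and the comparability $\varsigma\leq w\varsigma$ needed to invoke the Richardson dimension formula.
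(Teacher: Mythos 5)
Your proposal is correct and follows essentially the same approach as the paper: inclusion from Lemma~\ref{L:image}, then equality because both sides are irreducible closed subvarieties of the same dimension $\ell(w)=\ell(w\varsigma)-\ell(\varsigma)$. Your write-up just makes explicit the ingredients (closedness of $\calF^{T_\eta}_\varsigma$, Proposition~\ref{P:coset} for length additivity and comparability, the Deodhar--Richardson irreducibility result) that the paper leaves implicit.
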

%%%%%%%%%%%%%%%%%%%%%%%%%%%%%%%%%%%%%%%%%%%%%%%%%%%%%%%%%%%%%%%%%%%%%%%%%%%%%%%%%

%%%%%%%%%%%%%%%%%%%%%%%%%%%%%%%%%%%%%%%%%%%%%%%%%%%%%%%%%%%%%%%%%%%%%%%%%%%%%%%%%
\begin{proof}
 By Lemma~\ref{L:image}, we have the inclusion $\subset$.
 The result follows as both sides are irreducible of dimension $\ell(w)=\ell(w\varsigma)-\ell(\varsigma)$. 
\end{proof}
%%%%%%%%%%%%%%%%%%%%%%%%%%%%%%%%%%%%%%%%%%%%%%%%%%%%%%%%%%%%%%%%%%%%%%%%%%%%%%%%%

%%%%%%%%%%%%%%%%%%%%%%%%%%%%%%%%%%%%%%%%%%%%%%%%%%%%%%%%%%%%%%%%%%%%%%%%%%%%%%%%%
\begin{corollary}\label{C:pushforward}
 Let $\iota_{\varsigma,*}\colon H_*(\calF_I)\to H_*(\calF)$ be the map on homology induced by
 $\iota_\varsigma$.
 Then 
\[
   \iota_{\varsigma,*}[X_w]\ =\ 
   [X_{w\varsigma}\cap X^\varsigma]\ =\ \frakS_\varsigma\cap[X_{w\varsigma}]\,.
\]
\end{corollary}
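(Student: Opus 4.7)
The plan is to handle the two equalities separately, relying on Theorem \ref{Th:image} for the first and the cap product formula $\frakS_v\cap[X_u]=[X^v\cap X_u]$ recorded in Section \ref{S:flag} for the second.

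For the first equality, I would note that $\iota_\varsigma$ is a closed embedding: by Billey and Braden, it is an isomorphism from $\calF_I$ onto the connected component $\calF^{T_\eta}_\varsigma$, composed with the inclusion of this component into $\calF$. Its restriction to $X_w$ is therefore an isomorphism onto its (closed) image, which by Theorem \ref{Th:image} is exactly $X_{w\varsigma}\cap X^\varsigma$. Since the pushforward of the fundamental class of an irreducible variety under a closed immersion is the fundamental class of the image, we obtain $\iota_{\varsigma,*}[X_w]=[X_{w\varsigma}\cap X^\varsigma]$.

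For the second equality, I would apply the cap product formula $\frakS_v\cap[X_u]=[X^v\cap X_u]$ with $v=\varsigma$ and $u=w\varsigma$. To invoke it one needs the intersection $X^\varsigma\cap X_{w\varsigma}$ to be nonempty and of the expected dimension, that is $w\varsigma\geq\varsigma$ in Bruhat order with $\ell(w\varsigma)-\ell(\varsigma)=\ell(w)$. Both facts are furnished by Proposition \ref{P:coset}: the interval $[\varsigma,w\varsigma]$ in $W$ is isomorphic to $[e,w]$ in $W_I$, so in particular $\varsigma\leq w\varsigma$, and $\ell(w\varsigma)=\ell(w)+\ell(\varsigma)$. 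Hence $\frakS_\varsigma\cap[X_{w\varsigma}]=[X^\varsigma\cap X_{w\varsigma}]$, completing the chain of equalities.

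There is no real obstacle here: the content of the corollary is packaged entirely by Theorem \ref{Th:image} together with the length additivity and interval isomorphism from Proposition \ref{P:coset}. The only point requiring a moment of care is checking that the embedding $\iota_\varsigma$ restricted to $X_w$ is actually birational onto the image (so that pushforward of the fundamental class is the fundamental class of the image, not some multiple), and this is immediate because $\iota_\varsigma$ is an isomorphism of varieties onto $\calF^{T_\eta}_\varsigma$.
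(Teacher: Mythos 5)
Your proposal is correct and follows essentially the same route the paper intends: the first equality is an immediate consequence of Theorem~\ref{Th:image} together with the fact that $\iota_\varsigma$ is a closed embedding (so pushforward of a fundamental class is the fundamental class of the image), and the second equality is the cap product identity $\frakS_v\cap[X_u]=[X^v\cap X_u]$ recorded in Section~\ref{S:flag}, applied with $v=\varsigma$ and $u=w\varsigma$. The paper states the corollary without proof precisely because it is this direct; your extra checks via Proposition~\ref{P:coset} that $\varsigma\leq w\varsigma$ and $\ell(w\varsigma)=\ell(w)+\ell(\varsigma)$ are a reasonable sanity check but not strictly needed for the cap product formula, which holds unconditionally (both sides vanish when the intersection is empty, and irreducibility of the expected dimension when nonempty is supplied by the Deodhar--Richardson result already quoted in the paper).
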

%%%%%%%%%%%%%%%%%%%%%%%%%%%%%%%%%%%%%%%%%%%%%%%%%%%%%%%%%%%%%%%%%%%%%%%%%%%%%%%%%

We use this to compute the map $\iota_\varsigma^*$ on the Schubert basis of cohomology.

%%%%%%%%%%%%%%%%%%%%%%%%%%%%%%%%%%%%%%%%%%%%%%%%%%%%%%%%%%%%%%%%%%%%%%%%%%%%%%%%%
\begin{theorem}\label{Th:coh_Schub}
 Let $\varsigma\in W^I$ be a minimal length representative of a right coset of $W_I$ and
 $\iota_\varsigma\colon\calF_I\to\calF$ be the corresponding section of the pattern map with 
 $\iota_\varsigma(wB_I)=w\varsigma B$.
 Then 
\[
   \iota^*_\varsigma(\frakS_u)\ =\ 
    \sum_{w\in W_I} c^{w\varsigma}_{u,\varsigma}\, \frakS_w\,,
\]
 where $\iota_\varsigma^*\colon H^*(\calF)\to H^*(\calF_I)$ is the induced map on cohomology.
\end{theorem}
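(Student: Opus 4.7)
The plan is to use duality plus the projection formula, so that this is essentially a one-line consequence of Corollary~\ref{C:pushforward} together with the pushforward identity~\eqref{Eq:pushforward_coeff}.

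First I would expand $\iota^*_\varsigma(\frakS_u)$ in the Schubert basis of $H^*(\calF_I)$, writing
\[
  \iota^*_\varsigma(\frakS_u)\ =\ \sum_{w\in W_I} a_w\,\frakS_w
\]
for as yet undetermined integers $a_w$. Because the Schubert basis on $\calF_I$ is self-dual under the pairing~\eqref{Eq:duality} (applied to $\calF_I$ rather than $\calF$), the coefficient $a_w$ is isolated by capping with $[X_w]$ and pushing forward along $\pi\colon\calF_I\to\mathrm{pt}$, so $a_w = \pi_*\bigl(\iota^*_\varsigma(\frakS_u)\cap[X_w]\bigr)$.

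Next I would apply the projection formula~\eqref{Eq:projection} to $f=\iota_\varsigma$, which moves $\iota^*_\varsigma$ off the cohomology class and replaces $[X_w]$ by its pushforward:
\[
   a_w\ =\ \pi_*\bigl(\frakS_u\cap \iota_{\varsigma,*}[X_w]\bigr).
\]
Now I invoke Corollary~\ref{C:pushforward}, which identifies $\iota_{\varsigma,*}[X_w]$ with $\frakS_\varsigma\cap[X_{w\varsigma}]$. Substituting and recognising $\frakS_u\cap(\frakS_\varsigma\cap[X_{w\varsigma}]) = (\frakS_u\cdot\frakS_\varsigma)\cap[X_{w\varsigma}]$ by associativity of the cap product, the pushforward formula~\eqref{Eq:pushforward_coeff} immediately gives $a_w = c^{w\varsigma}_{u,\varsigma}$, which is exactly the claim.

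Nothing here looks like a real obstacle: the only thing to be mildly careful about is that the sum runs over $w\in W_I$ (not over all of $W$), but this is precisely what makes the identification with a structure constant on $\calF$ well-posed, since $w\varsigma\in W$ is unambiguous and, by Proposition~\ref{P:coset}, has length $\ell(w)+\ell(\varsigma)$, so that $c^{w\varsigma}_{u,\varsigma}$ is automatically zero unless $\ell(w)=\ell(u)$, matching the degree of $\iota^*_\varsigma(\frakS_u)$. The conceptual content of the proof is therefore entirely carried by Corollary~\ref{C:pushforward}; the rest is standard duality-and-projection formula bookkeeping.
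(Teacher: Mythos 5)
Your proof is correct and follows essentially the same route as the paper: isolate the coefficient of $\frakS_w$ by capping with $[X_w]$ and pushing forward, apply the projection formula~\eqref{Eq:projection}, substitute Corollary~\ref{C:pushforward} for $\iota_{\varsigma,*}[X_w]$, and recognize the result as $c^{w\varsigma}_{u,\varsigma}$ via~\eqref{Eq:pushforward_coeff}. The remark about degrees at the end is a nice sanity check but not needed by the paper.
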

%%%%%%%%%%%%%%%%%%%%%%%%%%%%%%%%%%%%%%%%%%%%%%%%%%%%%%%%%%%%%%%%%%%%%%%%%%%%%%%%%

%%%%%%%%%%%%%%%%%%%%%%%%%%%%%%%%%%%%%%%%%%%%%%%%%%%%%%%%%%%%%%%%%%%%%%%%%%%%%%%%%
\begin{proof}
 Write \defcolor{$\iota$} for $\iota_\varsigma$ and let $u\in W$.
 Since Schubert classes form a basis of cohomology, there are integer 
 \demph{decomposition coefficients $d^w_u$} for $w\in W_I$ defined by the identity
\[
    \iota^*(\frakS_u)\ =\ \sum_{w\in W_I} d^w_u\, \frakS_w\,.
\]
 Using duality and applying the pushforward map, we have
 \begin{eqnarray*}
  d^w_u &=&  \pi_*(\iota^*(\frakS_u) \cap [X_w])\\
       &=&  \pi_*( \frakS_u \cap \iota_*[X_w])
       \ =\ \pi_*(\frakS_u\cdot \frakS_\varsigma \cap [X_{w\varsigma}]) 
       \ =\ c^{w\varsigma}_{u,\varsigma}\,,
 \end{eqnarray*}
 with the last line following from Corollary~\ref{C:pushforward} and~\eqref{Eq:pushforward_coeff}.
\end{proof}
%%%%%%%%%%%%%%%%%%%%%%%%%%%%%%%%%%%%%%%%%%%%%%%%%%%%%%%%%%%%%%%%%%%%%%%%%%%%%%%%%

Recall that we have
\[
  H^*(\calF,\C)\ =\ S_{\bullet}\frakh^*\otimes_{(S_{\bullet}\frakh^*)^W} \C
   \qquad\mbox{and}\qquad
  H^*(\calF_I,\C)\ =\ S_{\bullet}\frakh^*\otimes_{(S_{\bullet}\frakh^*)^{W_I}} \C\ ,
\]
as $G$ and $G_I$ have the same maximal torus.
The map $\iota_\varsigma^*$ is induced by the natural map on the symmetric algebra
$S_{\bullet}\frakh^*$ coming from the map $\varsigma\colon\frakh\to\frakh$ given by the action of $W$ on 
$\frakh$.
We deduce the following result.

%%%%%%%%%%%%%%%%%%%%%%%%%%%%%%%%%%%%%%%%%%%%%%%%%%%%%%%%%%%%%%%%%%%%%%%%%%%%%%%%%
\begin{theorem}\label{Th:coh_gens}
  The map $\iota_\varsigma^*$ on cohomology is induced by the map 
  $S_\bullet\varsigma\colon S_{\bullet}\frakh^*\to S_{\bullet}\frakh^*$.
  That is, for $x\in\frakh$ and $f\in S_{\bullet}\frakh^*$, this map is
\[
   (\iota_\varsigma^* f)(x)\ =\ f(\varsigma x)\,.
\]
\end{theorem}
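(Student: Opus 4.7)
My plan is to reduce the claim to a check on the degree-one generators of the Borel presentation. This presentation realizes $H^*(\calF, \C)$ as a quotient of $S_\bullet \frakh^*$ via a surjection $\varphi$ sending each character $\chi \in \frakh^*_\Z$ to the first Chern class of the associated line bundle $L_\chi = G \times_B \C_\chi$ on $\calF$; likewise $\varphi_I$ for $\calF_I$. Since $\varphi$ is surjective and both $\iota_\varsigma^*$ and $S_\bullet \varsigma$ are ring homomorphisms, establishing the identity $\iota_\varsigma^* \circ \varphi = \varphi_I \circ S_\bullet \varsigma$ reduces to verifying it on linear elements $\chi \in \frakh^*$; multiplicativity then takes care of the rest. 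Moreover, $S_\bullet \varsigma$ preserves $(S_\bullet \frakh^*)^W$ (since $\varsigma \in W$) and therefore descends correctly to the quotient.

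The core calculation is to identify the pullback $\iota_\varsigma^* L_\chi$ as a line bundle $L^I_{\chi'}$ on $\calF_I$ and to match $\chi'$ with the Weyl translate of $\chi$ dual to the action of $\varsigma$ on $\frakh$. Because $\iota_\varsigma \colon \calF_I \to \calF$ is $G_I$-equivariant (the formula $\iota_\varsigma(hg B_I) = h(g\varsigma B)$ makes this immediate), the pullback is $G_I$-equivariant, hence of the form $L^I_{\chi'}$ for a unique character $\chi'$ of $B_I$. I identify $\chi'$ by tracing the $B_I$-action on the fiber of $\iota_\varsigma^* L_\chi$ over $eB_I$: this fiber is canonically the fiber of $L_\chi$ over $\varsigma B$, and the $B_I$-action on it is well-defined precisely because $\varsigma \in W^I$ forces $\varsigma^{-1} B_I \varsigma \subset B$, i.e.\ $B_I \subset \varsigma B \varsigma^{-1}$. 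A direct computation then shows that $b \in B_I$ acts as the scalar $\chi(\varsigma^{-1} b \varsigma)$, so $\chi' = \chi \circ \mathrm{Ad}_{\varsigma^{-1}}$; restricted to $T$ and compared with the natural $W$-action on $\frakh^*$ dual to that on $\frakh$, this character is the image of $\chi$ under $S_\bullet \varsigma$. Applying $c_1$ and composing with $\varphi_I$ gives $\iota_\varsigma^* \varphi(\chi) = \varphi_I(S_\bullet \varsigma(\chi))$, which is the required equality on generators.

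The main obstacle is the careful bookkeeping of conventions---the direction of the $W$-action on $\frakh$ versus its dual $\frakh^*$, and the sign in the associated-bundle construction $L_\chi = G \times_B \C_{\pm\chi}$---which must be aligned so that the calculation above reproduces the formula $(\iota_\varsigma^* f)(x) = f(\varsigma x)$ exactly (as opposed to with $\varsigma$ replaced by $\varsigma^{-1}$). An alternative route that sidesteps some of this bookkeeping is via $T$-equivariant cohomology and fixed-point localization: the $T$-equivariant Chern class $c_1^T(L_\chi)$ restricts to the weight $w\chi$ at each fixed point $wB \in \calF^T = W$, and since $\iota_\varsigma$ sends $w \in W_I$ to $w\varsigma$, the equivariant pullback restricts to $(w\varsigma)\chi$ at the fixed point $wB_I$; this identifies the pullback as the equivariant class corresponding to the Weyl translate of $\chi$ by $\varsigma$, and passage to ordinary cohomology completes the check on generators.
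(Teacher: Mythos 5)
Your proposal is in essence the argument the paper intends but does not write out: the paper's ``proof'' of Theorem~\ref{Th:coh_gens} consists of the single assertion, immediately after recalling the two Borel presentations over the common torus, that $\iota_\varsigma^*$ is induced by the action of $\varsigma$ on $\frakh$. You supply the missing content, and you do so in the natural way --- reduce to the degree-one generators (which is exactly what the Borel presentation invites), identify $\iota_\varsigma^*L_\chi$ as a $G_I$-equivariant line bundle by using the $G_I$-equivariance of $\iota_\varsigma$, and read off the character from the $B_I$-action on the fiber over $eB_I\mapsto\varsigma B$, using that $\varsigma\in W^I$ forces $\varsigma^{-1}B_I\varsigma\subset B$. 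The check that $S_\bullet\varsigma$ descends (it maps $(S_\bullet\frakh^*)^W_+$ into the $W_I$-invariant ideal since $W_I\subset W$) is also correctly noted. The localization alternative you sketch is a clean way to package the same character computation and is consistent with the first argument.

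One point deserves a sharper statement than you give it. You correctly compute $\chi' = \chi\circ\mathrm{Ad}_{\varsigma^{-1}}$, and you flag that aligning this with the asserted formula $(\iota_\varsigma^*f)(x)=f(\varsigma x)$ requires care with the direction of the $W$-action on $\frakh$. This is not merely cosmetic bookkeeping to be waved away: under the usual left action $w\cdot x = \mathrm{Ad}(\dot w)x$ on $\frakh$, your $\chi'$ is $\chi\circ\varsigma^{-1}$, while $S_\bullet\varsigma(\chi)$ as literally written in the theorem is $\chi\circ\varsigma$; these genuinely disagree when $\varsigma\ne\varsigma^{-1}$. However, your formula is the one that is actually consistent with the paper's own Remark~\ref{R:typeA}, where $\psi_P^*$ sends $x_a\mapsto y_{\varsigma_P(a)}$ or $z_{\varsigma_P(a)-n}$, i.e.\ $\epsilon_a\mapsto\epsilon_{\varsigma_P(a)}=\varsigma_P\epsilon_a$, which is $\chi\circ\mathrm{Ad}_{\varsigma_P^{-1}}$. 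So the discrepancy you noticed is a convention ambiguity in how the theorem's ``map $\varsigma\colon\frakh\to\frakh$'' is to be read, not an error in your derivation; your computation pins down the correct normalization. I would state this affirmatively (``the computation shows $\iota_\varsigma^*\chi = \varsigma\chi = \chi\circ\mathrm{Ad}_{\varsigma^{-1}}$, matching Remark~\ref{R:typeA}'') rather than leaving it as an unresolved obstacle.
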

%%%%%%%%%%%%%%%%%%%%%%%%%%%%%%%%%%%%%%%%%%%%%%%%%%%%%%%%%%%%%%%%%%%%%%%%%%%%%%%%%

We combine Theorems~\ref{Th:coh_Schub} and~\ref{Th:coh_gens} to get an algebraic formula for 
specializations of representatives of Schubert classes given by a minimal length coset representative $\varsigma$.

%%%%%%%%%%%%%%%%%%%%%%%%%%%%%%%%%%%%%%%%%%%%%%%%%%%%%%%%%%%%%%%%%%%%%%%%%%%%%%%%%
\begin{corollary}\label{C:Coh_ultimate_formula}
 Let $P_u\in S_{\bullet}\frakh^*$ be a representative of the Schubert class $\frakS_u\in H^*(\calF)$.
 Then, for $x\in\frakh$ and $\varsigma\in W^I$ a minimal length coset representative, we have
\[
   P_u(\varsigma x)\ \equiv\ 
    \sum_{w\in W_I} c^{w\varsigma}_{u,\varsigma}\, P_w(x)
   \qquad \mod \langle (S_{\bullet}\frakh^*)^{W_I}_+\rangle\ ,
\]
 where $P_w\in S_{\bullet}\frakh^*$ for $w\in W_I$ are representatives of Schubert classes in $H^*(\calF_I)$. 
\end{corollary}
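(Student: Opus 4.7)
The plan is to observe that the corollary is essentially a translation of the equality of two different expressions for the pullback map $\iota_\varsigma^*$ into the Borel presentation. Theorem~\ref{Th:coh_Schub} describes $\iota_\varsigma^*$ in the Schubert basis, while Theorem~\ref{Th:coh_gens} describes it at the level of polynomial representatives via the map $S_\bullet \varsigma$ on $S_\bullet \frakh^*$. Since both describe the same cohomology homomorphism, their values must agree in $H^*(\calF_I,\C)$, and the stated congruence is exactly this agreement lifted to representatives in $S_\bullet\frakh^*$.

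Concretely, I would first apply $\iota_\varsigma^*$ to the class $\frakS_u$. By Theorem~\ref{Th:coh_Schub}, the resulting class in $H^*(\calF_I,\C)$ equals $\sum_{w\in W_I} c^{w\varsigma}_{u,\varsigma}\,\frakS_w$, which is represented by the polynomial $\sum_{w\in W_I} c^{w\varsigma}_{u,\varsigma}\, P_w(x)$. On the other hand, by Theorem~\ref{Th:coh_gens}, the same class is represented by $P_u(\varsigma x)$, since $\iota_\varsigma^*$ is induced by the map $S_\bullet\varsigma\colon f\mapsto f\circ\varsigma$. Equating these two polynomial representatives of the same class in
\[
  H^*(\calF_I,\C)\ =\ S_\bullet\frakh^*/\langle (S_\bullet\frakh^*)^{W_I}_+\rangle
\]
yields the claimed congruence modulo $\langle (S_\bullet\frakh^*)^{W_I}_+\rangle$.

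The only nontrivial point to check is that the map $f\mapsto f\circ\varsigma$ really does descend to a well-defined homomorphism from $H^*(\calF,\C)$ to $H^*(\calF_I,\C)$, i.e., that it sends $\langle (S_\bullet\frakh^*)^{W}_+\rangle$ into $\langle (S_\bullet\frakh^*)^{W_I}_+\rangle$. This is immediate from $W_I\subset W$: any $W$-invariant polynomial is automatically $W_I$-invariant, and for a $W$-invariant $f$ one has $f(\varsigma x)=f(x)$, so $f\circ\varsigma$ lies in $(S_\bullet\frakh^*)^{W_I}_+$. Once this verification is in hand, the corollary follows with no further computation, as it is merely the compatibility of the Schubert-basis expansion from Theorem~\ref{Th:coh_Schub} with the generator-level description from Theorem~\ref{Th:coh_gens}. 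There is no genuine obstacle here: the real content was already established in the two preceding theorems, and this corollary simply records their common consequence at the level of polynomial representatives.
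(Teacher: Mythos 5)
Your proof is correct and takes exactly the approach the paper intends: the corollary is stated as a direct combination of Theorems~\ref{Th:coh_Schub} and~\ref{Th:coh_gens}, and you spell out precisely that combination at the level of polynomial representatives. The extra check that $f\mapsto f\circ\varsigma$ sends $\langle(S_\bullet\frakh^*)^W_+\rangle$ into $\langle(S_\bullet\frakh^*)^{W_I}_+\rangle$ is a sensible sanity check, and your argument for it is right.
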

%%%%%%%%%%%%%%%%%%%%%%%%%%%%%%%%%%%%%%%%%%%%%%%%%%%%%%%%%%%%%%%%%%%%%%%%%%%%%%%%%

%%%%%%%%%%%%%%%%%%%%%%%%%%%%%%%%%%%%%%%%%%%%%%%%%%%%%%%%%%%%%%%%%%%%%%%%%%%%%%%%%
\begin{remark}\label{R:product}
 The formula for $\iota_{\varsigma}^*(\frakS_u)$ in Theorem~\ref{Th:coh_Schub} gives an algorithm to compute it.
 First expand $\frakS_u\cdot\frakS_\varsigma$ in the Schubert basis of $H^*(\calF)$.
 Restrict the sum to terms of the form $\frakS_{w\varsigma}$ with $w\in W_I$, and 
 then replace $\frakS_{w\varsigma}$ by $\frakS_{w}$ to obtain the expression for $\iota_{\varsigma}^*(\frakS_u)$.
\end{remark}
%%%%%%%%%%%%%%%%%%%%%%%%%%%%%%%%%%%%%%%%%%%%%%%%%%%%%%%%%%%%%%%%%%%%%%%%%%%%%%%%%

%%%%%%%%%%%%%%%%%%%%%%%%%%%%%%%%%%%%%%%%%%%%%%%%%%%%%%%%%%%%%%%%%%%%%%%%%%%%%%%%%
\begin{example}\label{Ex:typeC}
 Suppose that $G$ is the symplectic group of Lie type $C_4$.
 Let $I$ be the subset of $C_4$ obtained by removing the long root 
 so that $G_I$ is the special linear group ${\it SL}_4$ of Lie type $A_3$.
 The Weyl group $C_4$ is the group of signed permutations whose elements are
 words $a_1\,a_2\,a_3\,a_4$, where the absolute values $|a_i|$ are distinct, and 
 the identity element is $1\,2\,3\,4$.
 The length of such a word is 
\[
   \ell(a_1\,a_2\,a_3\,a_4)\ =\ \#\{i<j\mid a_i>a_j\}\ +\ \sum_{a_i<0} |a_i|\,.
\]
If we use $\overline{a}$ to represent $-a$, then we have
\[
  \ell(3\,\overline{1}\,4\,2)\ =\ 4\,,\quad
  \ell(\overline{2}\,\overline{3}\,4\,1)\ =\ 7\,,\quad
  \mbox{and}\quad  \ell(\overline{2}\,\overline{1}\,3\,4)\ =\ 3\,.
\]

The action of $S_4$ on such words is to permute the absolute values without changing the signs.
The right cosets correspond to subsets $P$ of $\{1,\dotsc,4\}$ where the elements in that
coset take negative values. 
Here are the minimal length coset representatives
\[
   \overline{2}\,\overline{1}\,3\,4\,,\ 
   3\, \overline{2}\,4\,\overline{1}\,,\ 
   2\,3\,\overline{1}\,4\,,\  \mbox{ and }\ 
   \overline{3}\,4\,\overline{2}\,\overline{1}
\]
that correspond to subsets $\{1,2\}$, $\{2,4\}$, $\{3\}$, and $\{1,3,4\}$, respectively.

Write \defcolor{$\frakC_u$} for $u\in C_4$ for Schubert classes in this type $C$ flag manifold $\calF$ and 
$\frakS_w$ for $w\in S_4$ for Schubert classes in the type $A$ flag manifold $\calF_I$.
We will let $\varsigma=\overline{2}\,\overline{1}\,3\,4$ and compute
$\iota^*_\varsigma(\frakC_{3\,\overline{1}\,4\,2})$.
Following Remark~\ref{R:product}, we first compute
$\frakC_{3\,\overline{1}\,4\,2}\cdot\frakC_{\overline{2}\,\overline{1}\,3\,4}$.
 
We use the Pieri formula for the symplectic flag manifold as given in~\cite{BS_Lagr_P}, for 
\[
   \frakC_\varsigma\ =\ \frakC_{\overline{2}\,\overline{1}\,3\,4}\ =\ 
    \frakC_{\overline{2}\,1\,3\,4}\cdot\frakC_{\overline{1}\,2\,3\,4}\ -\ 
     2\cdot \frakC_{\overline{3}\,1\,2\,4}\,,
\]
and the Pieri formula is for multiplication by 
$\frakC_{\overline{1}\,2\,3\,4}$, $\frakC_{\overline{2}\,1\,3\,4}$, and $\frakC_{\overline{3}\,1\,2\,4}$.
We obtain
%[1, -4, 2, 3], 1
%[1, -4, 3, -2], 2
%%%%%[-1, -3, 4, -2], 0
%
%[-2, -1, 3, 4], 2
%[-1, -4, 3, 2], 2
%[-2, -3, 4, 1], 2
%[-1, -3, 2, 4], 2
%
 \[
  \qquad \frakC_{3\,\overline{1}\,4\,2}\cdot\frakC_{\overline{2}\,\overline{1}\,3\,4}
    \ =\     \frakC_{\overline{3}\,\overline{2}\,4\,\overline{1}}
     \ +\   2\frakC_{2\,\overline{3}\,4\,\overline{1}}  
%%%%%%%%%%%%%%%%     \ +\   4\frakC_{2\,\overline{4}\,3\,1} \\
     \;\ +\;\   2\frakC_{\overline{4}\,\overline{3}\,1\,2}
     \ +\   2\frakC_{\overline{2}\,\overline{3}\,4\,1}
     \ +\   2\frakC_{\overline{1}\,\overline{4}\,3\,2}
     \ +\   2\frakC_{\overline{4}\,\overline{2}\,3\,1}\,.\qquad
 \]
 As only the indices of the last four terms have the form $w\varsigma$, we see that
\[
  \iota_{\varsigma}^*\bigl(\frakC_{3\,\overline{1}\,4\,2}\bigr)\ =\ 
            2\frakS_{3412}
     \ +\   2\frakS_{3241}
     \ +\   2\frakS_{4132}
     \ +\   2\frakS_{2431}\,.
\]  
\end{example}
%%%%%%%%%%%%%%%%%%%%%%%%%%%%%%%%%%%%%%%%%%%%%%%%%%%%%%%%%%%%%%%%%%%%%%%%%%%%%%%%%

%%%%%%%%%%%%%%%%%%%%%%%%%%%%%%%%%%%%%%%%%%%%%%%%%%%%%%%%%%%%%%%%%%%%%%%%%%%%%%%%%
\begin{remark}\label{R:typeA}
 The results in this section generalize results in~\cite{BS98}, which was concerned with the flag
 variety $\calF_{n+m}$ of the general linear group  ${\it GL}_{n+m}$ with root system $A_{n+m-1}$. 
 Section~4.5 of~\cite{BS98} studied an embedding of flag manifolds $\psi_P\colon\calF_n\times\calF_m\to\calF_{n+m}$
 corresponding to a subset $P$ of $\defcolor{[m{+}n]}:=\{1,\dotsc,n{+}m\}$ of cardinality $n$.
 Writing $P$ and its complement $\defcolor{P^c}:=[m{+}n]\smallsetminus P$ in order,
\[
  P\ \colon\ p_1<\dotsb<p_n
  \qquad\mbox{and}\qquad
  P^c\ \colon\ p^c_1<\dotsb<p^c_m\,,
\]
 the pullback map on cohomology
\[
   \psi_P^*\ \colon\ H^*(\calF_{n+m})\ \longrightarrow\ 
   H^*(\calF_n\times\calF_m)\ \simeq\ H^*(\calF_n)\otimes H^*(\calF_m)\,,
\]
 is induced by the map 
\[
   \psi_P^*\ \colon\ x_a\ \longmapsto \left\{ \begin{array}{rcl} 
     y_i&\ &\mbox{if } a=p_i\\
     z_j&&\mbox{if } a=p^c_j
   \end{array}\right. .
\]
 where $x_1,\dotsc,x_{n+m}$ generate $H^*(\calF_{n+m})$, 
 $y_1,\dotsc,y_n$ generate $H^*(\calF_n)$, and $z_1,\dotsc,z_m$ generate $H^*(\calF_m)$.
 The effect of $\psi^*_P$ on the Schubert basis was expressed in terms of 
 Schubert structure constants for $\calF_{n+m}$, detailed in Theorem~4.5.4 and Remark~4.5.5 of~\cite{BS98}.

 These formulas are the specialization of Theorem~\ref{Th:coh_Schub} and Corollary~\ref{C:Coh_ultimate_formula} to
 the situation of~\cite[\S~4.5]{BS98}.
 In our notation, $I$ is the subset of the Dynkin diagram $A_{n+m-1}$ obtained by removing the $n$th node,
 $G_I={\it GL}_n\times{\it GL}_m$,  $W_I=S_n\times S_m$, and 
 $\calF_I\simeq \calF_n\times\calF_m$.
 The minimal coset representative $W^I$ corresponding to the map $\psi_P$ is the inverse shuffle 
 $\varsigma_P$ defined by 
\[
   \varsigma_P\ \colon\ \left\{ \begin{array}{rclcl} 
     p_i& \mapsto& i&\ &\mbox{for }i=1,\dotsc,n\\
     p^c_j& \mapsto& m{+}j&&\mbox{for }j=1,\dotsc,m
   \end{array}\right. .
\]
This permutation is written $\varepsilon_{P,[n]}(e,e)$ in~\cite{BS98} 
and for $v\times w\in S_n\times S_m$, the permutation $(v\times w)\varsigma_P$ is
written $\varepsilon_{P,[n]}(v,w)$.

Then, in the notation we use here, Theorem~4.5.4 of~\cite{BS98} becomes Theorem~\ref{Th:coh_Schub},
\[
   \iota_{\varsigma_P}^*(\frakS_u)\ =\ 
    \sum_{v\times w\in S_n\times S_m} c^{(v\times w)\varsigma_P}_{u,\varsigma_P}\;
   \frakS_{v\times w}\,,
\]
as $\frakS_v\otimes\frakS_w=\frakS_{v\times w}$ under the K\"unneth isomorphism
$H^*(\calF_I)=H^*(\calF_n)\otimes H^*(\calF_m)$.

Finally, the map $\iota_{\varsigma_P}^*$ on $S_{\bullet}\frakh^*$ agrees with the map $\Psi_P^*$ of~\cite{BS98},
where we write the generators of $H^*(\calF_I)$ as $y_1,\dotsc,y_n, z_1,\dotsc,z_m$ as above.
\end{remark}
%%%%%%%%%%%%%%%%%%%%%%%%%%%%%%%%%%%%%%%%%%%%%%%%%%%%%%%%%%%%%%%%%%%%%%%%%%%%%%%%%

%%%%%%%%%%%%%%%%%%%%%%%%%%%%%%%%%%%%%%%%%%%%%%%%%%%%%%%%%%%%%%%%%%%%%%%%%%%%%%%%%
\subsection{The pattern map in the Grothendieck ring}\label{SS:Groth}

The results of Subsection~\ref{SS:Coh} generalize nearly immediately to Grothendieck rings of the flag varieties
$\calF$ and $\calF_I$.
In particular, Theorem~\ref{Th:image} implies the analog of Corollary~\ref{C:pushforward}.
Namely, if $w\in W_I$ and $\varsigma\in W^I$ is a minimal length coset representative, then
 \begin{equation}\label{Eq:Groth_push_sheaves}
   \iota_{\varsigma,*}(\calO_{X_w})\ =\ \calO_{X_{w\varsigma}\cap X^{\varsigma}}
     \ =\ \calO_{X_{w\varsigma}}\otimes \calO_{X^\varsigma}\,,
 \end{equation}
where $\iota_{\varsigma,*}$ is the (derived) pushforward map on sheaves, which induces the map $K_0(\calF_I)\to
K_0(\calF)$. 

Theorem~\ref{Th:coh_gens} also immediately generalizes.
The map $\iota_{\varsigma}^*\colon K^0(\calF)\to K^0(\calF_I)$ is induced by the action of 
$\varsigma$ on $\frakh_\Z^*$, leading to a formula similar to Corollary~\ref{C:Coh_ultimate_formula}
for polynomial representatives of Grothendieck classes $\calG_w$, once we generalize the
formula of Theorem~\ref{Th:coh_Schub}.

What remains is a formula for the \demph{decomposition coefficients $d^w_u$} for $u\in W$ and $w\in W_I$ defined by
the identity, 
 \begin{equation}\label{Eq:Groth_Decomp}
   \iota_\varsigma^*(\calG_u)\ =\ \sum_{w\in W_I} d^w_u \,\calG_w\,.
 \end{equation}
Using duality~\eqref{Eq:Groth_duality} for Grothendieck classes, we have
\[
   d^w_u\ =\ \pi_*(\iota_\varsigma^*(\calG_u) \cdot [\calI_w])
        \ =\ \pi_*(\calG_u\cdot \iota_{\varsigma,*}[\calI_w])\,.
\]
We prove the following lemma, which will enable this calculation.

%%%%%%%%%%%%%%%%%%%%%%%%%%%%%%%%%%%%%%%%%%%%%%%%%%%%%%%%%%%%%%%%%%%%%%%%%%%%%%%%%
\begin{lemma}\label{L:Dual_push}
  With these definitions, we have 
  $\iota_{\varsigma,*}(\calI_w)\ =\ \calI_{w\varsigma}\otimes \calO_{X^\varsigma}$.
\end{lemma}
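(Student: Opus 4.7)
The plan is to expand $[\calI_w]$ using M\"obius inversion on the Bruhat order of $W_I$, push the expansion forward into $K_0(\calF)$ via \eqref{Eq:Groth_push_sheaves}, reindex the resulting sum using Proposition~\ref{P:coset}, and identify the answer with $[\calI_{w\varsigma}]\cdot[\calO_{X^\varsigma}]$ by showing that the ``missing'' M\"obius terms are killed upon multiplication by $[\calO_{X^\varsigma}]$.

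Concretely, I would first apply $\iota_{\varsigma,*}$ to \eqref{Eq:ideal_sheaves} and invoke \eqref{Eq:Groth_push_sheaves} term-by-term to obtain
\[
  \iota_{\varsigma,*}[\calI_w]\ =\
  \sum_{v\leq w} (-1)^{\ell(w)-\ell(v)}\,[\calO_{X_{v\varsigma}}]\cdot [\calO_{X^\varsigma}]\,,
\]
where $v$ runs over elements of $W_I$ with $v\leq w$. By Proposition~\ref{P:coset}, the assignment $v\mapsto v\varsigma$ is a length-preserving bijection between the Bruhat intervals $[e,w]\subset W_I$ and $[\varsigma,w\varsigma]\subset W$; substituting $u=v\varsigma$, so that $\ell(w)-\ell(v)=\ell(w\varsigma)-\ell(u)$, recasts the display as
\[
  \iota_{\varsigma,*}[\calI_w]\ =\
   \Bigl(\sum_{\varsigma\leq u\leq w\varsigma}(-1)^{\ell(w\varsigma)-\ell(u)}[\calO_{X_u}]\Bigr)\cdot[\calO_{X^\varsigma}]\,.
\]

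The M\"obius expansion of $[\calI_{w\varsigma}]$ from \eqref{Eq:ideal_sheaves} runs over the full interval $[e,w\varsigma]$ in $W$, so the remaining task is to show that the extra terms with $u\leq w\varsigma$ but $u\not\geq\varsigma$ contribute zero after multiplication by $[\calO_{X^\varsigma}]$. For such $u$, the Schubert variety $X_u$ and the opposite Schubert variety $X^\varsigma$ are disjoint; they are closures of orbits under opposite Borel subgroups and hence meet properly, giving $[\calO_{X_u}]\cdot[\calO_{X^\varsigma}]=[\calO_{X_u\cap X^\varsigma}]=0$. Appending these vanishing terms and applying \eqref{Eq:ideal_sheaves} a second time to $w\varsigma$ then yields
\[
  \iota_{\varsigma,*}[\calI_w]\ =\
  \Bigl(\sum_{u\leq w\varsigma}(-1)^{\ell(w\varsigma)-\ell(u)}[\calO_{X_u}]\Bigr)\cdot[\calO_{X^\varsigma}]\ =\
  [\calI_{w\varsigma}]\cdot[\calO_{X^\varsigma}]\,,
\]
which is the claim.

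The main obstacle I expect is justifying the identity $[\calO_{X_u}]\cdot[\calO_{X^\varsigma}]=[\calO_{X_u\cap X^\varsigma}]$ without a generic-translate correction: Brion's formula as recorded in the paper is stated for $\calG_u\cdot\calG_v$ paired through a general $g\in G$, whereas here one needs the proper-intersection property of a standard Schubert variety against an opposite one. This property is already used implicitly in \eqref{Eq:Groth_push_sheaves}, so the required input is in hand; still, this is the step that carries the real geometric content of the argument.
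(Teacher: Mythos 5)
Your argument is essentially the paper's proof: both expand $[\calI_w]$ via~\eqref{Eq:ideal_sheaves}, push forward term-by-term using~\eqref{Eq:Groth_push_sheaves}, reindex the sum by $v\mapsto v\varsigma$ using Proposition~\ref{P:coset}, and then extend the sum from the interval $[\varsigma,w\varsigma]$ to all $u\leq w\varsigma$ by observing that $\calO_{X_u}\otimes\calO_{X^\varsigma}$ is zero when $u\not\geq\varsigma$, which recovers $\calI_{w\varsigma}\otimes\calO_{X^\varsigma}$. The concern you flag at the end is not a real obstacle for the added terms: when $u\not\geq\varsigma$ the supports $X_u$ and $X^\varsigma$ are disjoint, so the (derived) tensor product vanishes outright and no generic-translate argument is needed.
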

%%%%%%%%%%%%%%%%%%%%%%%%%%%%%%%%%%%%%%%%%%%%%%%%%%%%%%%%%%%%%%%%%%%%%%%%%%%%%%%%%

As the projection formula~\eqref{Eq:projection} also holds for the Grothendieck ring/group, 
the same arguments as in the proof of Theorem~\ref{Th:coh_Schub} yield the following theorem.

%%%%%%%%%%%%%%%%%%%%%%%%%%%%%%%%%%%%%%%%%%%%%%%%%%%%%%%%%%%%%%%%%%%%%%%%%%%%%%%%%
\begin{theorem}\label{Th:Groth_Schub}
 With these definitions, we have
\[
   \iota^*_\varsigma(\calG_u)\ =\ 
    \sum_{w\in W_I} c^{w\varsigma}_{u,\varsigma}\, \calG_w\,,
\]
 where $\iota_\varsigma^*\colon K^0(\calF)\to K^0(\calF_I)$ is the induced map on Grothendieck rings.
\end{theorem}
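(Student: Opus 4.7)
The plan is to mirror the cohomology proof in Theorem~\ref{Th:coh_Schub}, with K-theoretic duality~\eqref{Eq:Groth_duality} replacing~\eqref{Eq:duality} and tensor products replacing cap products. Granting Lemma~\ref{L:Dual_push}, the argument is essentially formal: let $d^w_u$ be the decomposition coefficients from~\eqref{Eq:Groth_Decomp}. Applying~\eqref{Eq:Groth_duality} on $\calF_I$ gives
\[
  d^w_u \;=\; \pi_*\bigl(\iota_\varsigma^*(\calG_u)\cdot[\calI_w]\bigr),
\]
and the K-theoretic projection formula~\eqref{Eq:projection} rewrites this as
\[
  d^w_u \;=\; \pi_*\bigl(\calG_u\cdot\iota_{\varsigma,*}[\calI_w]\bigr).
\]
Substituting Lemma~\ref{L:Dual_push} and recalling $[\calO_{X^\varsigma}]=\calG_\varsigma$ yields
\[
  d^w_u \;=\; \pi_*\bigl(\calG_u\cdot\calG_\varsigma\cdot[\calI_{w\varsigma}]\bigr)\;=\;c^{w\varsigma}_{u,\varsigma},
\]
the last equality being~\eqref{Eq:Groth_push_coeff}.

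The substantive work is therefore concentrated in Lemma~\ref{L:Dual_push}, and that is what I expect to be the real obstacle. I see two routes. The \emph{algebraic} route expands $[\calI_w]$ via the M\"obius inversion~\eqref{Eq:ideal_sheaves} and applies the structure-sheaf pushforward~\eqref{Eq:Groth_push_sheaves} termwise; the resulting sum should be recognizable as $[\calI_{w\varsigma}]\otimes\calO_{X^\varsigma}$ after reindexing, using the order isomorphism $[e,w]\cong[\varsigma,w\varsigma]$ of Proposition~\ref{P:coset} to match signs and ranges (the elements $v\leq w$ in $W_I$ correspond bijectively to elements $v\varsigma$ with $\varsigma\leq v\varsigma\leq w\varsigma$, and $\ell(w)-\ell(v)=\ell(w\varsigma)-\ell(v\varsigma)$).

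The \emph{geometric} route is perhaps cleaner: $\iota_\varsigma$ is a closed embedding identifying $\calF_I$ with $\overline{B_-'\varsigma}\subset X^\varsigma$. By Theorem~\ref{Th:image}, $\iota_\varsigma(X_w)=X_{w\varsigma}\cap X^\varsigma$, and a dimension count together with the Bruhat-order compatibility of the pattern map shows $\iota_\varsigma(X^\circ_w)=X^\circ_{w\varsigma}\cap X^\varsigma$ as an open dense subset of $\iota_\varsigma(X_w)$. Consequently the ideal sheaf of the complement of $X^\circ_w$ in $X_w$, pushed forward under $\iota_\varsigma$, is the ideal sheaf in $X_{w\varsigma}\cap X^\varsigma$ of the intersection $X^\varsigma\cap(X_{w\varsigma}\setminus X^\circ_{w\varsigma})$; this is exactly the restriction of $\calI_{w\varsigma}$ to $X^\varsigma$, i.e.\ $\calI_{w\varsigma}\otimes\calO_{X^\varsigma}$. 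The delicate point will be justifying that $X^\varsigma$ meets the Schubert boundary of $X_{w\varsigma}$ transversally enough that the scheme-theoretic ideal sheaf really does restrict to the claimed tensor product; invoking that Schubert and opposite Schubert varieties intersect properly (Richardson) should suffice.
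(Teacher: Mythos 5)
Your proof of the theorem is exactly the paper's argument: dualize using~\eqref{Eq:Groth_duality}, apply the projection formula, substitute Lemma~\ref{L:Dual_push}, and conclude via~\eqref{Eq:Groth_push_coeff}, and your ``algebraic route'' to Lemma~\ref{L:Dual_push} (M\"obius inversion~\eqref{Eq:ideal_sheaves}, termwise pushforward~\eqref{Eq:Groth_push_sheaves}, reindexing via Proposition~\ref{P:coset} and the vanishing of $\calO_{X_u}\otimes\calO_{X^\varsigma}$ for $u\not\geq\varsigma$) is precisely the paper's proof of that lemma. The alternative geometric route you sketch for the lemma is not used in the paper but is a reasonable supplement.
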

%%%%%%%%%%%%%%%%%%%%%%%%%%%%%%%%%%%%%%%%%%%%%%%%%%%%%%%%%%%%%%%%%%%%%%%%%%%%%%%%%

%%%%%%%%%%%%%%%%%%%%%%%%%%%%%%%%%%%%%%%%%%%%%%%%%%%%%%%%%%%%%%%%%%%%%%%%%%%%%%%%%
\begin{proof}[Proof of Lemma~$\ref{L:Dual_push}$]
 We use the expression~\eqref{Eq:ideal_sheaves} for the ideal sheaves, the pushforward
 formula~\eqref{Eq:Groth_push_sheaves}, and Proposition~\ref{P:coset} to compute
 \begin{eqnarray*}
   \iota_{\varsigma,*}(\calI_w) &=& \sum_{v\leq w}(-1)^{\ell(w)-\ell(v)} \iota_{\varsigma,*}(\calO_{X_v})
  \ =\ \sum_{v\leq w}(-1)^{\ell(w)-\ell(v)} \calO_{X_{v\varsigma}}\otimes \calO_{X^\varsigma}\\
  &\stackrel{!}{=}& \sum_{u\leq w\varsigma} (-1)^{\ell(w)-\ell(v)} \calO_{X_{u}}\otimes \calO_{X^\varsigma}
  \ =\ \calI_{w\varsigma}\otimes \calO_{X^{\varsigma}}\,.
 \end{eqnarray*}
 The equality $(\stackrel{!}{=})$ follows as we have $u\leq w\varsigma$ and $\calO_{X_{u}}\otimes
 \calO_{X^\varsigma}$ is the zero sheaf unless $\varsigma\leq u$.
 Thus the sum over $u\leq w\varsigma$ is equal to the sum over $u$ in the interval
 $[\varsigma,w\varsigma]_\leq$ in the Bruhat order, and this interval is parameterized by 
 the interval $[e,w]$ in the Bruhat order on $W_I$ under the map $v\mapsto v\varsigma$, by
 Proposition~\ref{P:coset}. 
\end{proof}
%%%%%%%%%%%%%%%%%%%%%%%%%%%%%%%%%%%%%%%%%%%%%%%%%%%%%%%%%%%%%%%%%%%%%%%%%%%%%%%%%

The decomposition coefficients $d^w_u$ of~\eqref{Eq:Groth_Decomp} are nonnegative in the same sense as the
Grothendieck structure constants $c^w_{u,v}$~\eqref{Eq:Groth_pos}.
Indeed,
\[
  (-1)^{\ell(w)-\ell(u)}d^w_u\ =\ 
  (-1)^{\ell(w)-\ell(u)}c^{w\varsigma}_{u,\varsigma}\ =\ 
  (-1)^{\ell(w\varsigma)-\ell(u)-\ell(\varsigma)}c^{w\varsigma}_{u,\varsigma}\ >\ 0\,,
\]
as $\ell(w)-\ell(u)=\ell(w)+\ell(\varsigma)-\ell(u)-\ell(\varsigma)\ =\ 
   \ell(w\varsigma)-\ell(u)-\ell(\varsigma)$ by Proposition~\ref{P:coset}.

%%%%%%%%%%%%%%%%%%%%%%%%%%%%%%%%%%%%%%%%%%%%%%%%%%%%%%%%%%%%%%%%%%%%%%%%%%%%%%%%%
\begin{remark}\label{R:Groth_typeA}
 With the same conventions as Remark~\ref{R:typeA} the results here for the map $\iota_{\varsigma}^*$ on
 Grothendieck rings specialize to those of~\cite[Sec.\ 7]{LRS} in type $A$.
\end{remark}
%%%%%%%%%%%%%%%%%%%%%%%%%%%%%%%%%%%%%%%%%%%%%%%%%%%%%%%%%%%%%%%%%%%%%%%%%%%%%%%%%

%%%%%%%%%%%%%%%%%%%%%%%%%%%%%%%%%%%%%%%%%%%%%%%%%%%%%%%%%%%%%%%%%%%%%%%%%%%%%%%%%
\begin{remark}
 The results here should hold for more general cohomology theories, such as $T$-equivariant $K$-theory.
 We plan to treat that in a sequel.
\end{remark}
%%%%%%%%%%%%%%%%%%%%%%%%%%%%%%%%%%%%%%%%%%%%%%%%%%%%%%%%%%%%%%%%%%%%%%%%%%%%%%%%%
\providecommand{\bysame}{\leavevmode\hbox to3em{\hrulefill}\thinspace}
\providecommand{\MR}{\relax\ifhmode\unskip\space\fi MR }
% \MRhref is called by the amsart/book/proc definition of \MR.
\providecommand{\MRhref}[2]{%
  \href{http://www.ams.org/mathscinet-getitem?mr=#1}{#2}
}
\providecommand{\href}[2]{#2}

%%%%%%%%%%%%%%%%%%%%%%%%%%%%%%%%%%%%%%%%%%%%%%%%%%%%%%%%%%%%%%%%%%%%%%%%%%%%%%%%%

\begin{thebibliography}{10}

\bibitem{BS98}
Nantel Bergeron and Frank Sottile, \emph{Schubert polynomials, the {B}ruhat
  order, and the geometry of flag manifolds}, Duke Math. J. \textbf{95} (1998),
  no.~2, 373--423.

\bibitem{BS_Lagr_P}
\bysame, \emph{A {P}ieri-type formula for isotropic flag manifolds}, Trans.
  Amer. Math. Soc. \textbf{354} (2002), no.~7, 2659--2705 (electronic).

\bibitem{BS02}
\bysame, \emph{Skew {S}chubert functions and the {P}ieri formula for flag
  manifolds}, Trans. Amer. Math. Soc. \textbf{354} (2002), no.~2, 651--673
  (electronic).

\bibitem{BGG73}
I.~N. Bernstein, I.~M. Gelfand, and S.~I. Gelfand, \emph{Schubert cells and
  cohomology of the spaces {$G/P$}}, Russian Mathematical Surveys \textbf{28}
  (1973), no.~3, 1--26.

\bibitem{BiPo}
Sara Billey and Alexander Postnikov, \emph{Smoothness of {S}chubert varieties
  via patterns in root subsystems}, Adv. in Appl. Math. \textbf{34} (2005),
  no.~3, 447--466.

\bibitem{BB}
Sara~C. Billey and Tom Braden, \emph{Lower bounds for {K}azhdan-{L}usztig
  polynomials from patterns}, Transform. Groups \textbf{8} (2003), no.~4,
  321--332.

\bibitem{BjBr}
Anders Bj{\"o}rner and Francesco Brenti, \emph{Combinatorics of {C}oxeter
  groups}, Graduate Texts in Mathematics, vol. 231, Springer, New York, 2005.

\bibitem{Bo53}
A.~Borel, \emph{Sur la cohomologie des espaces fibr\'es principaux et des
  espaces homog\`enes des groupes de {L}ie compacts}, Ann. Math. \textbf{57}
  (1953), 115--207.

\bibitem{BrLa}
M.~Brion and V.~Lakshmibai, \emph{A geometric approach to standard monomial
  theory}, Represent. Theory \textbf{7} (2003), 651--680.

\bibitem{Br02}
Michel Brion, \emph{Positivity in the {G}rothendieck group of complex flag
  varieties}, J. Algebra \textbf{258} (2002), no.~1, 137--159.

\bibitem{BKTY1}
Anders~S. Buch, Andrew Kresch, Harry Tamvakis, and Alexander Yong,
  \emph{Schubert polynomials and quiver formulas}, Duke Math. J. \textbf{122}
  (2004), no.~1, 125--143.

\bibitem{BKTY2}
\bysame, \emph{Grothendieck polynomials and quiver formulas}, Amer. J. Math.
  \textbf{127} (2005), no.~3, 551--567.

\bibitem{Bu01}
Anders~Skovsted Buch, \emph{Grothendieck classes of quiver varieties}, Duke
  Math. J. \textbf{115} (2002), no.~1, 75--103.

\bibitem{Bu05}
\bysame, \emph{Alternating signs of quiver coefficients}, J. Amer. Math. Soc.
  \textbf{18} (2005), 217--237.

\bibitem{BSY}
Anders~Skovsted Buch, Frank Sottile, and Alexander Yong, \emph{Quiver
  coefficients are {S}chubert structure constants}, Math. Res. Lett.
  \textbf{12} (2005), no.~4, 567--574.

\bibitem{Ch91}
C.~Chevalley, \emph{Sur les d\'ecompositions cellulaires des espaces {$G/B$}},
  Algebraic Groups and their Generalizations: Classical Methods (W.~Haboush,
  ed.), Proc.~Sympos.~Pure Math., vol. 56, Part 1, Amer.~Math.~Soc., 1994,
  pp.~1--23.

\bibitem{De74}
M.~Demazure, \emph{D\'esingularization des vari\'et\'es de {S}chubert
  g\'en\'eralis\'ees}, Ann. Sc. E. N. S. (4) \textbf{7} (1974), 53--88.

\bibitem{Deodhar}
Vinjay Deodhar, \emph{On some geometric aspects of {B}ruhat orderings. {I}. {A}
  finer decomposition of {B}ruhat cells}, Invent. Math. \textbf{79} (1985),
  499--511.

\bibitem{KMS}
Allen Knutson, Ezra Miller, and Mark Shimozono, \emph{Four positive formulae
  for type {$A$} quiver polynomials}, Invent. Math. \textbf{166} (2006), no.~2,
  229--325.

\bibitem{LRS}
Cristian Lenart, Shawn Robinson, and Frank Sottile, \emph{Grothendieck
  polynomials via permutation patterns and chains in the {B}ruhat order}, Amer.
  J. Math. \textbf{128} (2006), no.~4, 805--848.

\bibitem{Mi}
Ezra Miller, \emph{Alternating formulas for {$K$}-theoretic quiver
  polynomials}, Duke Math. J. \textbf{128} (2005), no.~1, 1--17.

\bibitem{Pi}
Harsh~V. Pittie, \emph{Homogeneous vector bundles on homogeneous spaces},
  Topology \textbf{11} (1972), 199--203.

\bibitem{Richardson}
R.~W. Richardson, \emph{Intersections of double cosets in algebraic groups},
  Indag. Math. (N.S.) \textbf{3} (1992), no.~1, 69--77.

\bibitem{Sp}
T.~A. Springer, \emph{Linear algebraic groups}, second ed., Progress in
  Mathematics, vol.~9, Birkh\"auser Boston Inc., Boston, MA, 1998.

\end{thebibliography}
\end{document}